\documentclass[11pt,a4paper]{amsart}

\usepackage[T1]{fontenc}
\usepackage[utf8]{inputenc}
\usepackage{lmodern}
\usepackage{a4wide}
\usepackage{amsmath,amssymb,mathtools}
\usepackage{microtype}
\usepackage{booktabs}
\usepackage{enumitem}
\usepackage{url}
\usepackage[
 colorlinks=true,
 linkcolor=blue,
 citecolor=blue,
 urlcolor=blue,
 pdftitle={A new bound for the Kalton--Roberts constant},
 pdfauthor={Boon Suan Ho and Tomasz Kania}
]{hyperref}

\newtheorem{theorem}{Theorem}[section]
\newtheorem{lemma}[theorem]{Lemma}
\newtheorem*{theoremA}{Theorem A}

\DeclareMathOperator{\dist}{dist}

\newcommand{\cA}{\mathcal A}
\newcommand{\cB}{\mathcal B}
\newcommand{\cC}{\mathcal C}
\newcommand{\cF}{\mathcal F}
\newcommand{\cL}{\mathcal L}
\newcommand{\KR}{K_{\mathrm{KR}}}
\newcommand{\RR}{\mathbb R}

\title[A new bound for the Kalton--Roberts constant]{A new bound for the Kalton--Roberts constant}

\author[B.~S.~Ho]{Boon Suan Ho}
\address[B.~S.~Ho]{Department of Mathematics\\
National University of Singapore\\
Singapore}
\email{hbs@u.nus.edu}

\author[T.~Kania]{Tomasz Kania}
\address[T.~Kania]{Mathematical Institute\\Czech Academy of Sciences\\
\v Zitn\'a 25 \\115 67 Praha 1\\Czech Republic  and  Institute of Mathematics and Computer Science\\ Jagiellonian University\\ {\L}ojasiewicza 6, 30-348 Krak\'{o}w, Poland
}
\email{kania@math.cas.cz, tomasz.marcin.kania@gmail.com}
\thanks{RVO: 67985840.}

\subjclass[2020]{Primary 28A10; Secondary 05C80, 28A12, 41A65}
\keywords{Kalton--Roberts constant, nearly additive set function,
finitely additive measure, best uniform approximation, expander graph}

\begin{document}

\begin{abstract}
Let \(\KR\) be the least constant governing uniform approximation of
approximately additive real-valued set functions by finitely additive signed
measures.  We prove that
\[
        \KR\leqslant \frac{374167}{20000}=18.70835.
\]
Starting from a norming functional for best uniform approximation, we form
two families of almost extremal sets with complementary point frequencies.
Intersections and biregular expanders then give two recombination schemes.
The sharper estimate comes from using the schemes on different ranges and
from cancelling the positive and negative errors with unequal weights.  This
improves the previously published bound \(38.8\).
\end{abstract}

\maketitle

\section{Introduction}

Let \(\Omega\) be a set and let \(\cF\) be an algebra of subsets of
\(\Omega\).  A function \(f\colon\cF\to\RR\) is called
\(\Delta\)-additive if \(f(\varnothing)=0\) and
\begin{equation}\label{eq:additivity}
 \lvert f(A)+f(B)-f(A\cup B)\rvert\leqslant\Delta
 \qquad(A,B\in\cF,\ A\cap B=\varnothing).
\end{equation}
The theorem of Kalton and Roberts asserts that there is a universal constant
\(C\) for which every \(\Delta\)-additive function is within \(C\Delta\),
uniformly on \(\cF\), of a finitely additive signed measure.  We denote the
least such constant by \(\KR\).

Kalton and Roberts proved that \(\KR\leqslant89/2\)
\cite{KaltonRoberts}.  Bondarenko, Prymak and Radchenko subsequently reduced
this to \(38.8\) \cite{BondarenkoPrymakRadchenko}.  Feige, Feldman and
Talgam-Cohen obtained the related bound \(23.811\) for weakly modular set
functions approximated by linear set functions, where a constant term is
allowed \cite{FeigeFeldmanTalgamCohen}.  The best lower bound for the present
constant is \(3\), due to Gnacik, Guzik and Kania
\cite{GnacikGuzikKania}; it improved Pawlik's earlier bound \(3/2\)
\cite{Pawlik}.

Our result is as follows.

\begin{theoremA}
For every set algebra \(\cF\), every \(\Delta\geqslant0\), and every
\(\Delta\)-additive function \(f\colon\cF\to\RR\), there is a finitely
additive signed measure \(\mu\) on \(\cF\) such that
\[
        \sup_{A\in\cF}\lvert f(A)-\mu(A)\rvert
        \leqslant \frac{374167}{20000}\,\Delta.
\]
Consequently, \(\KR\leqslant374167/20000=18.70835\).
\end{theoremA}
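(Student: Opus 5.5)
We first reduce to finite algebras and then reduce the approximation problem, by duality, to a purely combinatorial estimate. If every \(\Delta\)-additive function on a finite algebra is within \(K\Delta\) of an additive one, then a standard compactness argument extends this to arbitrary \(\cF\). For the compactness step, take the net of finite subalgebras. For each one, the set of admissible measures is a nonempty compact subset of a product of bounded intervals, and Tychonoff's theorem produces a finitely additive limit. So let \(\cF\) be the power set of a finite set \(\Omega=\{1,\dots,n\}\), and normalise \(\Delta=1\). Write \(d=\min_\mu\max_A|f(A)-\mu(A)|\). The minimisation is a finite linear program, so a norming functional exists: nonnegative weights \(\lambda_A^+,\lambda_A^-\) with total mass \(1\) such that
\[
 d=\sum_A(\lambda_A^+-\lambda_A^-)f(A),\qquad
 \sum_{A\ni i}\lambda^+_A=\sum_{A\ni i}\lambda^-_A=:p_i \quad\text{for every } i .
\]
The second condition says that the functional annihilates every additive \(\mu\). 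The families \(\cA=\operatorname{supp}\lambda^+\) and \(\cB=\operatorname{supp}\lambda^-\) are the almost extremal sets. Each has \(f-\mu^*\approx\pm d\) for the optimal \(\mu^*\), and both have the same point frequencies \(p_i\). Passing to complements where needed, we may arrange that the frequencies of the two families are complementary, \(p_i\) against \(1-p_i\). Replacing \(f\) by \(f-\mu^*\), the goal is to prove \(d\le 18.70835\) using only \(1\)-additivity together with \(f\ge d\) on \(\cA\) and \(f\le -d\) on \(\cB\).

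The core step is a recombination inequality. Take several sets from \(\cA\) and \(\cB\), weighted by \(\lambda\), and cut and reglue them into pieces so that the resulting multiset of sets covers each point with equal multiplicity on both sides. Then the total \(f\)-value computed via additivity differs from \(\sum(\lambda^+-\lambda^-)f\) by a controlled number of \(\Delta\)'s. The inequality this yields has the form \(2d\cdot(\text{mass})\le (\text{number of additivity steps})+(\text{error terms in }d)\). The first scheme uses intersections. From \(A\in\cA\) and \(A'\in\cA\), the iterates \(A\cap A'\) and \(A\cup A'\) stay nearly extremal, with \(f(A\cap A')+f(A\cup A')\ge f(A)+f(A')-O(1)\). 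Iterating this lets us reach sets whose frequencies are pushed toward \(0\) or \(1\), where their \(f\)-value is pinned to within \(O(1)\), since \(f(\varnothing)=0\) and \(f(\Omega)\) is close to \(\mu(\Omega)\). The second scheme handles the regime where the frequencies \(p_i\) are bounded away from \(0\) and \(1\). There we partition \(\Omega\) by a biregular bipartite expander between copies of the sets, with degrees matching the frequencies. Expansion lets us assemble \(\Omega\) from \(k\) disjoint pieces of \(\cA\)-type sets with only \(O(\log k)\)-type additivity losses per piece, rather than the linear loss of a naive chain. Here we use the Kalton--Roberts device of a \(\Delta\)-additivity bound \(|f(\bigcup_{j\le m}C_j)-\sum f(C_j)|\le c\log_2 m\) along a balanced binary tree, combined with expander mixing to control overlaps.

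The final step is optimisation. We split the range of the frequency parameter into the range where the intersection scheme is better and the range where the expander scheme is better. On each range we take the resulting linear inequality in \(d\). We then combine the positive-error and negative-error inequalities with unequal weights \(\theta\) and \(1-\theta\), exploiting that the two families enter asymmetrically after the complementation. Optimising over the split point, \(\theta\), and the expander degrees, by exact rational arithmetic at the end, should produce \(374167/20000\). I expect the main obstacle to be the expander scheme. We need biregular expanders with explicit spectral gap, for instance via random biregular graphs or Ramanujan-type bounds, and the loss constants must be sharp enough that the two-range combination actually beats the \(38.8\) of \cite{BondarenkoPrymakRadchenko} and reaches the stated constant. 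Tracking those constants carefully is where I would spend most of the effort.
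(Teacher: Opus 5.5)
Your finite reduction and norming functional match the paper, but the recombination mechanism you propose would fail, and nothing in the proposal produces the constant.

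\textbf{The recombination step.} The claimed binary-tree bound $|f(\bigcup_{j\le m}C_j)-\sum_j f(C_j)|\leqslant c\log_2 m$ is false. On $2^{\{1,\dots,m\}}$ with $m\geqslant2$, put $f(\{i\})=\tfrac12$ and $f(A)=0$ for $|A|\neq1$. This $f$ is $1$-additive, yet $\sum_j f(\{j\})-f(\{1,\dots,m\})=m/2$. A tree with $m$ leaves has $m-1$ merges, and the errors add over all of them, not along one root-to-leaf path.

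Your intersection scheme also gives no inequality for $M$ on its own. Small point frequency does not make sets close to $\varnothing$: the $k$ blocks of a partition of $U$, each with weight $1/k$, have frequency $1/k$ everywhere. So $f(\varnothing)=0$ controls nothing. In the paper, intersections only push frequencies below an expansion threshold $\alpha$. Both of its chains use expanders: $E_1$ then $E_2$, and $E_3$, an intersection step, then $E_3$ again.

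The actual gain comes from linear accounting. Label the $N$ left vertices of an $(r,c)$-biregular graph by the sets. Vertex expansion and Hall's theorem send the occurrences of each point to distinct right neighbours, so the pieces meeting at a right vertex are disjoint. Counting $\sum_e f(C_e)$ both ways then gives
\[
(1-\theta)M\leqslant D-\theta D'+2r-1-\theta,\qquad \theta=r/c.
\]
The saving is the shrinkage from $N$ sets to $\theta N$ sets, paid for by $r-1$ and $c-1$ additivity steps per vertex. No overlaps remain, so expander mixing and spectral gaps play no role. What is needed is Hall-type expansion of all left sets of size at most $\alpha N$ for specific $(\alpha,r,c)$. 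The paper verifies this by Pippenger's first-moment count.

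\textbf{The two families and the weights.} You are also missing the construction that makes the frequencies usable. The paper mixes positive active sets with complements of negative active sets, and vice versa. Because $\sum_{A\ni i}\lambda^+_A=\sum_{A\ni i}\lambda^-_A$, every point then has frequency exactly $q$ in $\cA$ and exactly $p$ in $\cB$. The price is an average deficit at most $q(1-u)$ and an average surplus at most $p(1+u)$, where $u=f(U)\in[-1,1]$. After intersections of levels $a$ and $b$, the deficit estimate is weighted by $bp$ and the surplus estimate by $aq$. This is exactly what makes the $u$-terms cancel.

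Your frequencies $p_i$ and $1-p_i$ vary from point to point and come with no bound on their maximum. Without such a bound you cannot check either the effect of intersections or the Hall condition. Your weights $\theta,1-\theta$ also have no stated target.

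\textbf{The constant.} The theorem is a specific number. Without explicit expander parameters, the two chains, the split into three $q$-ranges, and the exact inequalities on each, ``optimising should produce $374167/20000$'' is a plan, not a proof.
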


As in the proof of Kalton and Roberts, the final step is a combinatorial
recombination.  The families to be recombined arise here from a norming
functional for best approximation by additive functions.  Its positive and
negative parts give two families of almost extremal sets with controlled point
frequencies.  Intersections reduce those frequencies, after which three
biregular expanders can be applied.  We use one direct two-step chain
and a second chain in which an extra intersection is inserted between two
applications of the same expander.  Neither chain is best over the whole
range; using them on different subintervals is what yields the stated bound.

The final saving comes from an asymmetric combination of the two families.
Let \(p\) and \(q\) be the masses of the positive and negative parts of the
norming functional, with \(q\leqslant p\), and put \(u=f(U)\) after the
finite reduction.  The deficit estimate contains a
term proportional to \(q(1-u)\), whereas the surplus estimate contains one
proportional to \(p(1+u)\).  Rather than averaging these estimates with equal
weights, we choose the weights so that the two occurrences of \(u\) cancel.

\section{Finite reduction and extremal families}

We begin with the standard reduction to finite power-set algebras.

\begin{lemma}\label{lem:finite-reduction}
Suppose that the assertion of Theorem~A holds on every finite power-set
algebra, with a constant \(C\) in place of \(374167/20000\).  Then it holds with
the same constant on every set algebra.
\end{lemma}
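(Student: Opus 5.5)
Given a \(\Delta\)-additive \(f\) on an arbitrary algebra \(\cF\), we first pass to finite subalgebras and then use a compactness argument. For each finite subalgebra \(\cF_0\subseteq\cF\), let \(\pi_0\) be its finite set of atoms. Then \(\cF_0\) is isomorphic to the power-set algebra of \(\pi_0\), via \(S\mapsto\bigcup S\), and \(f|_{\cF_0}\) transported along this isomorphism is still \(\Delta\)-additive, because disjoint unions correspond. The hypothesis therefore gives a finitely additive \(\mu_0\) on \(\cF_0\) with \(\lvert f(A)-\mu_0(A)\rvert\leqslant C\Delta\) for all \(A\in\cF_0\). If \(\Delta=0\), then \(f\) is itself additive and there is nothing to prove, so we may assume \(\Delta>0\).

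Next we set up a compact space. Let \(K=\prod_{A\in\cF}[f(A)-C\Delta,\,f(A)+C\Delta]\), which is compact Hausdorff by Tychonoff's theorem. For each finite subalgebra \(\cF_0\), define
\[
M(\cF_0)=\{\nu\in K:\ \nu(A\cup B)=\nu(A)+\nu(B)\text{ for all disjoint }A,B\in\cF_0\}.
\]
This set is closed, since it is an intersection of closed conditions each involving finitely many coordinates. It is also nonempty. To see this, extend \(\mu_0\) to all of \(\cF\) by putting \(\nu(A)=\mu_0(A)\) for \(A\in\cF_0\) and \(\nu(A)=f(A)\) otherwise. Every coordinate then lies in the required interval, and the additivity conditions on \(\cF_0\) hold.

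The family \(\{M(\cF_0)\}\) has the finite intersection property. Given \(\cF_1,\dots,\cF_n\), the algebra they generate is again finite, because an algebra generated by finitely many sets is finite. Call it \(\cF'\). Then \(M(\cF')\subseteq\bigcap_i M(\cF_i)\), and \(M(\cF')\) is nonempty by the previous step.

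By compactness, some \(\mu\) lies in every \(M(\cF_0)\). Any two disjoint \(A,B\in\cF\) lie in the finite algebra they generate, so \(\mu\) is finitely additive on \(\cF\), with \(\mu(\varnothing)=0\) following from additivity. Membership in \(K\) gives \(\lvert f(A)-\mu(A)\rvert\leqslant C\Delta\) for every \(A\in\cF\).

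The only step needing care is that the extended \(\nu\) really lies in \(K\) and satisfies the constraints; this is why the constraints are imposed only on \(\cF_0\). An alternative would be an ultrafilter limit of the \(\mu_{\cF_0}\) along the directed set of finite subalgebras, but the Tychonoff formulation avoids choosing the extensions coherently.
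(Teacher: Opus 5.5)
Your proof is correct and follows essentially the same route as the paper. Both use a Tychonoff compactness argument in \(\prod_{A\in\cF}[f(A)-C\Delta,f(A)+C\Delta]\), obtain the finite intersection property from the finite case applied to a finite subalgebra identified with the power set of its atoms, and fill in the remaining coordinates arbitrarily. The differences are cosmetic: you index the closed sets by finite subalgebras rather than by individual additivity equations, and you keep \(\Delta\) general instead of scaling to \(\Delta=1\).
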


\begin{proof}
The case \(\Delta=0\) is immediate.  By scaling, it remains to consider
\(\Delta=1\).  For \(A\in\cF\), let
\(I_A=[f(A)-C,f(A)+C]\), and consider the compact space
\[
        X=\prod_{A\in\cF}I_A.
\]
For disjoint \(A,B\in\cF\), the equation
\(\nu(A\cup B)=\nu(A)+\nu(B)\) defines a closed subset of \(X\).  Any
finite collection of these equations involves only a finite subalgebra
\(\cF_0\) of \(\cF\).  After identifying \(\cF_0\) with the power set of
its atoms, the finite assertion gives an additive measure on \(\cF_0\) which
lies in the required intervals.  Extending its remaining coordinates
arbitrarily gives a point of \(X\) satisfying the chosen equations.  The
closed conditions therefore have the finite intersection property.
Compactness supplies a point of \(X\) satisfying all of them, and this point
is the required finitely additive measure.
\end{proof}

For the rest of the proof, \(U\) is finite and \(f\colon2^U\to\RR\) is
\(1\)-additive.  Additive functions on \(2^U\) form the subspace
\[
 \cL=\left\{A\longmapsto\sum_{i\in A}a_i:(a_i)_{i\in U}\in\RR^U\right\}
 \subseteq\RR^{2^U}.
\]
Choose a closest element of \(\cL\) to \(f\) in the supremum norm and
subtract it.  Thus, writing
\[
        M=\lVert f\rVert_\infty,
\]
we have \(M=\dist_\infty(f,\cL)\).

If \(M=0\), there is nothing left to prove.  We therefore assume from now on
that \(M>0\).

\begin{lemma}\label{lem:norming-vector}
There is a rational vector \(\lambda=(\lambda_A)_{A\subseteq U}\) such
that
\begin{enumerate}[label=\textup{(\roman*)}]
\item \(\sum_A\lvert\lambda_A\rvert=1\);
\item \(\sum_{A\ni i}\lambda_A=0\) for every \(i\in U\);
\item \(\lambda_A>0\) implies \(f(A)=M\), and \(\lambda_A<0\) implies
      \(f(A)=-M\).
\end{enumerate}
\end{lemma}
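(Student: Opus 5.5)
This is a finite-dimensional duality statement for best uniform approximation. We obtain it from linear-programming duality (Farkas), and rationality comes from the rational data of the dual system. Index the sets in the finite set \(2^U\) and put \(S=\{A: f(A)=M\}\) and \(T=\{A:f(A)=-M\}\).

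\textbf{Step 1: extremal sets cannot all be pushed inward.} Since \(f\) is a best approximant of itself from \(\cL\) (that is, \(0\) is a closest point), there is no vector \(a\in\RR^U\) with
\[
 \sum_{i\in A}a_i>0\ (A\in S)\quad\text{and}\quad \sum_{i\in A}a_i<0\ (A\in T).
\]
Suppose such an \(a\) existed, and write \(\ell_a(A)=\sum_{i\in A}a_i\). For small \(t>0\) we would have \(|f(A)-t\ell_a(A)|<M\) for every \(A\in S\cup T\). For \(A\notin S\cup T\) we have \(|f(A)|<M\), and this slack is at least some positive amount because \(2^U\) is finite; so \(|f(A)-t\ell_a(A)|<M\) there as well for small \(t\). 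This gives \(\|f-t\ell_a\|_\infty<M\), contradicting \(M=\dist_\infty(f,\cL)\). The set \(S\cup T\) is nonempty because \(M>0\).

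\textbf{Step 2: apply Gordan's alternative.} Consider the vectors \(v_A=\mathbf 1_A\in\RR^U\) for \(A\in S\) and \(v_A=-\mathbf 1_A\) for \(A\in T\). Step 1 says there is no \(a\) with \(\langle a,v_A\rangle>0\) for all these \(A\). By Gordan's theorem there are then nonnegative \(c_A\), not all zero, with \(\sum_A c_Av_A=0\). (Note that \(S\cap T=\varnothing\) since \(M>0\).) Define \(\lambda_A=c_A\) on \(S\), \(\lambda_A=-c_A\) on \(T\), and \(\lambda_A=0\) elsewhere. Then (ii) and (iii) hold, and normalizing gives (i).

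\textbf{Step 3: rationality, the only delicate point.} The set of \(c\ge0\) satisfying \(\sum_A c_Av_A=0\) and \(\sum_A c_A=1\) is a nonempty polytope, and it is cut out by linear equations and inequalities with integer coefficients. Any vertex of this polytope is the unique solution of a nonsingular subsystem with integer coefficients. By Cramer's rule such a vertex is rational. Choosing \(c\) to be a vertex makes \(\lambda\) rational, and \(\sum_A|\lambda_A|=\sum_A c_A=1\).

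The main obstacle is making sure the perturbation argument in Step 1 is airtight. The rest is standard linear programming.
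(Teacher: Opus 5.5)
Your proof is correct. Its rationality step (Step~3) is the paper's: a vertex of the polytope of non-negative weights on the extremal sets with total mass one and balanced marginals. Where you depart from the paper is in producing a norming vector at all.

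The paper cites the finite-dimensional Hahn--Banach duality for best uniform approximation, \(\dist_\infty(f,\cL)=\max\{\langle\lambda,f\rangle:\lambda\perp\cL,\ \lVert\lambda\rVert_1\leqslant1\}\). It then reads off the sign condition (iii) from the equality case of H\"older's inequality, \(\sum_A\lambda_Af(A)=M=\lVert\lambda\rVert_1\lVert f\rVert_\infty\).

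You instead prove the optimality condition by hand. Your first-order perturbation in Step~1 is essentially the necessity half of Kolmogorov's criterion: no additive direction strictly decreases \(\lvert f\rvert\) on every extremal set. Finiteness of \(2^U\) supplies the uniform slack off \(S\cup T\). Gordan's alternative then turns the absence of such a direction into a non-negative dependence among the vectors \(\pm\mathbf 1_A\).

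What your route buys is self-containment. Only a theorem of the alternative is used. Condition (iii) is built into the construction, because the certificate is supported on \(S\cup T\) with the correct signs from the outset, so no H\"older step is needed. The nonemptiness of the rationality polytope also follows directly from your Step~2, rather than from an externally cited norming functional. The paper's route is shorter, packaging the same linear-programming duality in functional-analytic language.
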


\begin{proof}
The annihilator of \(\cL\) consists of the vectors satisfying the equations
in (ii).  The finite-dimensional Hahn--Banach criterion for best uniform
approximation gives a vector in this annihilator with \(\ell_1\)-norm one and
\(\langle\lambda,f\rangle=M\).  Equality in H\"older's inequality gives
(iii).

It remains to ensure rationality.  Put
\[
 P=\{A:f(A)=M\},\qquad N=\{A:f(A)=-M\}.
\]
Use non-negative variables \(x_A\) for \(A\in P\) and \(y_A\) for
\(A\in N\), subject to
\[
 \sum_{A\in P}x_A+\sum_{A\in N}y_A=1,
 \qquad
 \sum_{\substack{A\in P\\ i\in A}}x_A
 =\sum_{\substack{A\in N\\ i\in A}}y_A \quad(i\in U).
\]
The preceding norming vector shows that this bounded rational polytope is
non-empty.  A vertex has rational coordinates.  Taking \(\lambda_A=x_A\)
on \(P\), \(\lambda_A=-y_A\) on \(N\), and zero elsewhere proves the
claim.
\end{proof}

Both signs occur in \(\lambda\).  Indeed, if all its coordinates had the same sign, the
zero-marginal equations would force \(\lambda\) to be supported on
\(\varnothing\), which is impossible because \(f(\varnothing)=0\neq M\).
Let
\[
 p=\sum_{\lambda_A>0}\lambda_A,
 \qquad
 q=-\sum_{\lambda_A<0}\lambda_A.
\]
After replacing \(f\) by \(-f\), if necessary, we may assume
\begin{equation}\label{eq:pq}
        p+q=1,\qquad 0<q\leqslant p,\qquad q\leqslant\frac12.
\end{equation}
Put \(u=f(U)\).  If \(A_+\) is a positive active set, then
\(f(A_+)=M\), \(f(A_+^c)\geqslant-M\), and approximate additivity gives
\(u\geqslant-1\).  A negative active set \(A_-\) gives
\(u\leqslant1\).  Hence
\begin{equation}\label{eq:u}
        -1\leqslant u\leqslant1.
\end{equation}

A rational weighted family will mean a finitely supported probability
distribution on \(2^U\) with rational weights.  Its point frequency at
\(i\in U\) is the total weight of the sets containing \(i\).  Clearing
denominators turns such a family into a finite multiset without changing
frequencies or averages.

\begin{lemma}\label{lem:extremal-families}
There are rational weighted families \(\cA\) and \(\cB\) such that
\begin{enumerate}[label=\textup{(\roman*)}]
\item every point has frequency \(q\) in \(\cA\), and the average of
      \(M-f(A)\) over \(\cA\) is at most \(q(1-u)\);
\item every point has frequency \(p\) in \(\cB\), and the average of
      \(M+f(B)\) over \(\cB\) is at most \(p(1+u)\).
\end{enumerate}
\end{lemma}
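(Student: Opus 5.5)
The plan is to build both families directly from the rational norming vector \(\lambda\) of Lemma~\ref{lem:norming-vector}, using complements to even out the point frequencies. For \(i\in U\) put
\[
 r_i=\sum_{\substack{\lambda_A>0\\ i\in A}}\lambda_A=\sum_{\substack{\lambda_A<0\\ i\in A}}\lvert\lambda_A\rvert,
\]
where the two sums agree by Lemma~\ref{lem:norming-vector}(ii). Neither the positive part nor the negative part of \(\lambda\) alone has constant frequency. However, adding the complements of the sets from the opposite part corrects each frequency exactly.

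\emph{Construction of \(\cA\).} Give each positive active set \(A\) weight \(\lambda_A\). Give each complement \(A^c\) of a negative active set \(A\) weight \(\lvert\lambda_A\rvert\), adding weights if the same set arises twice. The total weight is \(p+q=1\), and all weights are rational because \(\lambda\) is rational. The frequency at \(i\) is \(r_i+(q-r_i)=q\), since the complements of negative sets contain \(i\) with total weight \(q-r_i\).

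For the average of \(M-f\), the positive sets contribute nothing, because \(f(A)=M\) there. For a negative set \(A\), apply \eqref{eq:additivity} to the disjoint pair \(A,A^c\). This gives \(f(A^c)\geqslant u-f(A)-1=u+M-1\), hence \(M-f(A^c)\leqslant 1-u\). Summing with weights \(\lvert\lambda_A\rvert\) bounds the average by \(q(1-u)\), which is (i).

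\emph{Construction of \(\cB\).} This is symmetric. Give each negative active set weight \(\lvert\lambda_A\rvert\), and each complement of a positive active set \(A\) weight \(\lambda_A\). The total weight is again \(1\), and the frequency at \(i\) is \(r_i+(p-r_i)=p\).

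For the average of \(M+f\), the negative sets contribute \(0\). For a positive set \(A\), \eqref{eq:additivity} gives \(f(A^c)\leqslant u-f(A)+1=u-M+1\), hence \(M+f(A^c)\leqslant 1+u\). Weighting by \(\lambda_A\) bounds the average by \(p(1+u)\), which is (ii).

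There is no real obstacle here. The one idea needed is to pair each part of \(\lambda\) with the complements of the other part, so that the zero-marginal condition turns the non-constant frequencies \(r_i\) into the constants \(q\) and \(p\). The rest is bookkeeping: merging coincident sets, and noting that \(\eqref{eq:u}\) makes the bounds \(1\mp u\) nonnegative, although this is not needed for the inequalities themselves.
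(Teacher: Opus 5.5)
Your proposal is correct and is essentially the paper's proof. Like the paper, you form \(\cA\) from the positive active sets together with the complements of the negative ones, and \(\cB\) symmetrically, and the zero-marginal condition gives the constant frequencies \(q\) and \(p\). The complement deficits and surpluses are then bounded by applying \(1\)-additivity to \(A\sqcup A^c=U\), exactly as the paper does.
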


\begin{proof}
Write \(\lambda=\lambda^+-\lambda^-\), and for \(i\in U\) put
\[
 m_i^+=\sum_{\substack{A\ni i}}\lambda_A^+,
 \qquad
 m_i^-=\sum_{\substack{A\ni i}}\lambda_A^-.
\]
The zero-marginal equations say that \(m_i^+=m_i^-\).

Form \(\cA\) from the positive active sets with their \(\lambda^+\)-weights
and the complements of the negative active sets with their
\(\lambda^-\)-weights.  Its total weight is one, and the frequency of \(i\)
is
\[
        m_i^++(q-m_i^-)=q.
\]
Positive active sets have deficit zero.  If \(A_-\) is negative active, then
\(A_-\sqcup A_-^c=U\), and \(1\)-additivity gives
\[
        M-f(A_-^c)\leqslant1-u.
\]
Averaging gives (i).  Define \(\cB\) using the negative active sets with
their \(\lambda^-\)-weights and the complements of the positive active sets
with their \(\lambda^+\)-weights.  The same calculation gives point
frequency \(p\).
The inequality
\[
        M+f(A_+^c)\leqslant1+u
\]
for positive active \(A_+\) gives (ii).
\end{proof}

\section{Intersections and recombination}

The next estimate is the only use of approximate additivity before the
expander argument.

\begin{lemma}\label{lem:modularity}
For all \(A,B\subseteq U\),
\begin{equation}\label{eq:modularity}
 \lvert f(A)+f(B)-f(A\cup B)-f(A\cap B)\rvert\leqslant2.
\end{equation}
If \(f(A)=M-a\) and \(f(B)=M-b\), then
\(f(A\cap B)\geqslant M-a-b-2\).  The analogous assertion holds for
surpluses after replacing \(f\) by \(-f\).
\end{lemma}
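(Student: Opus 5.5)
The plan is to derive \eqref{eq:modularity} by applying \eqref{eq:additivity} with \(\Delta=1\) to two disjoint decompositions of the same sets. Fix \(A,B\subseteq U\) and split \(A\cup B\) into the three pairwise disjoint pieces \(X=A\setminus B\), \(Y=A\cap B\) and \(Z=B\setminus A\). Write \(e(S,T)=f(S)+f(T)-f(S\cup T)\) for disjoint \(S,T\); the hypothesis says \(\lvert e(S,T)\rvert\leqslant1\).

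The key step is an algebraic identity expressing the modular defect through these additivity defects. Using \(A=X\sqcup Y\), \(B=Z\sqcup Y\), and \(A\cup B=A\sqcup Z\), we have
\[
 f(A)=f(X)+f(Y)-e(X,Y),\qquad f(B)=f(Y)+f(Z)-e(Y,Z),
\]
\[
 f(A\cup B)=f(A)+f(Z)-e(A,Z).
\]
Substituting gives
\[
 f(A)+f(B)-f(A\cup B)-f(A\cap B)
 =f(B)-f(Z)-f(Y)+e(A,Z)=e(A,Z)-e(Y,Z).
\]
This is a difference of two defects, so its absolute value is at most \(2\), which proves \eqref{eq:modularity}. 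The only care needed is to choose the decomposition so that the \(f(X)\) terms cancel and exactly two defects remain, rather than three, which would only give the bound \(3\).

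For the consequence, suppose \(f(A)=M-a\) and \(f(B)=M-b\). Then \eqref{eq:modularity} gives
\[
 f(A\cap B)\geqslant f(A)+f(B)-f(A\cup B)-2.
\]
Since \(f(A\cup B)\leqslant\lVert f\rVert_\infty=M\), this yields
\[
 f(A\cap B)\geqslant 2M-a-b-M-2=M-a-b-2.
\]
Because \(-f\) is also \(1\)-additive with the same sup norm \(M\), the same argument applied to \(-f\) gives the surplus version: if \(f(A)=-M+a\) and \(f(B)=-M+b\), then \(f(A\cap B)\leqslant -M+a+b+2\).

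I expect no real obstacle here. The only point requiring care is finding the identity with two defect terms.
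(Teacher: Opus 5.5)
Your proof is correct and is essentially the paper's argument with \(A\) and \(B\) interchanged: the paper decomposes \(A=(A\cap B)\sqcup(A\setminus B)\) and \(A\cup B=B\sqcup(A\setminus B)\) to write the modular defect as a difference of two additivity defects, just as you do with \(B=Y\sqcup Z\) and \(A\cup B=A\sqcup Z\), and then uses \(f(A\cup B)\leqslant M\) in the same way. The identity you display for \(f(A)\) is never used in the computation and can be dropped.
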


\begin{proof}
Put \(I=A\cap B\), \(X=A\setminus B\), and \(Y=B\setminus A\).  Apply
\eqref{eq:additivity} to \(A=I\sqcup X\) and to
\(A\cup B=B\sqcup X\).  Subtracting the corresponding identities with
their error terms and applying the triangle inequality gives
\eqref{eq:modularity}.  Since \(f(A\cup B)\leqslant M\), the deficit
estimate follows; the surplus estimate is the same argument for \(-f\).
\end{proof}

For a weighted family \(\cC\), let \(\cC^{\cap\ell}\) be the product
distribution of intersections of \(\ell\) independently chosen members.  For
\(\tau\in\mathbb Q\cap[0,1]\), write
\[
 \cC^{(\ell,\tau)}
 =(1-\tau)\cC^{\cap\ell}+\tau\cC^{\cap(\ell+1)}.
\]

\begin{lemma}\label{lem:intersections}
Suppose that the point frequencies of \(\cC\) are at most \(t\), and its
average deficit is at most \(E\).  Put \(m=\ell+\tau\).  Then
\(\cC^{(\ell,\tau)}\) has point frequencies at most
\begin{equation}\label{eq:intersection-frequency}
        (1-\tau)t^\ell+\tau t^{\ell+1}
\end{equation}
and average deficit at most
\begin{equation}\label{eq:intersection-cost}
        mE+2(m-1).
\end{equation}
The same conclusion holds for surpluses.
\end{lemma}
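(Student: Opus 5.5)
We first treat the pure powers \(\cC^{\cap\ell}\) and then take the convex combination. For the frequency claim, fix \(i\in U\) and draw \(C_1,\dots,C_\ell\) independently from \(\cC\). Then \(i\in C_1\cap\dots\cap C_\ell\) exactly when \(i\in C_j\) for every \(j\). By independence this event has probability equal to the product of the individual frequencies, which is at most \(t^\ell\). Frequencies are linear in the distribution, so the mixture \(\cC^{(\ell,\tau)}\) has frequency at most \((1-\tau)t^\ell+\tau t^{\ell+1}\). This is \eqref{eq:intersection-frequency}.

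For the deficit, we show by induction on \(\ell\geqslant1\) that
\[
M-f(C_1\cap\dots\cap C_\ell)\leqslant\sum_{j=1}^{\ell}\bigl(M-f(C_j)\bigr)+2(\ell-1)
\]
pointwise. The case \(\ell=1\) is trivial. For the inductive step, apply Lemma~\ref{lem:modularity} to \(A=C_1\cap\dots\cap C_{\ell}\) and \(B=C_{\ell+1}\). This gives
\[
f(A\cap B)\geqslant M-a-b-2,
\]
where \(a=M-f(A)\) and \(b=M-f(B)\). The lemma only uses \(f(A\cup B)\leqslant M\), and that holds because \(M=\lVert f\rVert_\infty\). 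Taking expectations over the independent draws, each \(C_j\) has marginal distribution \(\cC\). Hence \(\cC^{\cap\ell}\) has average deficit at most \(\ell E+2(\ell-1)\).

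Mixing with weights \(1-\tau\) and \(\tau\) bounds the average deficit of \(\cC^{(\ell,\tau)}\) by
\[
(1-\tau)\bigl(\ell E+2(\ell-1)\bigr)+\tau\bigl((\ell+1)E+2\ell\bigr)=(\ell+\tau)E+2(\ell+\tau-1)=mE+2(m-1).
\]
This is \eqref{eq:intersection-cost}, since both expressions are affine in \(\ell\) and agree at the endpoints.

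The surplus version follows the same way from the surplus half of Lemma~\ref{lem:modularity}, that is, by applying the argument to \(-f\). Rationality of the resulting weights is clear, because the product and mixture of rational distributions are rational (given \(\tau\in\mathbb Q\)). There is no real obstacle. The only point to check is that Lemma~\ref{lem:modularity} can be applied to an intersection that is itself not extremal. This is fine because the lemma holds for arbitrary deficits \(a,b\).
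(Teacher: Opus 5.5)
Your proof is correct and follows the same route as the paper. Frequencies multiply under independent intersections, and induction on \(\ell\) via Lemma~\ref{lem:modularity} bounds the deficit of an \(\ell\)-fold intersection by the sum of the source deficits plus \(2(\ell-1)\); averaging the \(\ell\)- and \((\ell+1)\)-fold estimates then gives \(mE+2(m-1)\), with your write-up simply spelling out the induction and the final arithmetic that the paper leaves implicit.
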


\begin{proof}
Point frequencies multiply under independent intersections.  Repeated use of
Lemma~\ref{lem:modularity} shows that an \(\ell\)-fold intersection has deficit
at most the sum of the source deficits plus \(2(\ell-1)\).  Averaging the
\(\ell\)- and \((\ell+1)\)-fold estimates gives
\eqref{eq:intersection-cost}.
\end{proof}

Let \(r\geqslant3\) and let \(c>r\) be integers, and put
\(\theta=r/c\).  An \((\alpha,r,\theta)\)-expander is a bipartite
multigraph \(G=(V,W;E)\) such that, for a positive integer \(N\) with
\(\theta N\) integral,
\[
 |V|=N,\qquad |W|=\theta N,
\]
every left vertex has degree \(r\), every right vertex has degree \(c\), and
every set of at most \(\alpha N\) left vertices has at least as many distinct
neighbours.  We call such \(N\) admissible.  We say that these expanders
exist if they exist for every sufficiently large admissible \(N\).

\begin{lemma}\label{lem:recombination}
Suppose that \((\alpha,r,\theta)\)-expanders exist.  Let \(\cC\) be a
rational weighted family with point frequencies at most \(\alpha\) and
average deficit \(D\).  There is a recombined family with point frequencies
at most \(\alpha/\theta\).  If its average deficit is \(D'\), then
\begin{equation}\label{eq:recombination}
        (1-\theta)M\leqslant D-\theta D'+2r-1-\theta.
\end{equation}
The same assertion holds for surpluses.
\end{lemma}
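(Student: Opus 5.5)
The plan is to use the Kalton--Roberts splitting argument, driven by Hall's marriage theorem on the expander. First I normalise the family. Clearing denominators, \(\cC\) becomes a multiset \(C_1,\dots,C_{N_0}\). Replacing each member by \(k\) copies multiplies the size by \(k\) and changes neither the point frequencies nor the average deficit. So I may take \(N=kN_0\) large and admissible, and fix an \((\alpha,r,\theta)\)-expander \(G=(V,W;E)\) with \(V=\{1,\dots,N\}\), where the left vertex \(v\) carries the set \(C_v\). For \(i\in U\), let \(S_i=\{v:i\in C_v\}\). Then \(|S_i|\leqslant\alpha N\), because the frequency of \(i\) is at most \(\alpha\).

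Next I route each point through the expander. The expansion property says that every subset of \(S_i\) has at least as many distinct neighbours as elements. Hall's theorem, applied to the bipartite graph between \(S_i\) and \(W\), therefore gives a matching: an injective assignment \(v\mapsto e_i(v)\) of an edge \(e_i(v)\) at \(v\) to each \(v\in S_i\), with the right endpoints of these edges pairwise distinct. Multiple edges cause no difficulty; I pick one of the parallel edges. For each edge \(e=(v,w)\), put
\[
P_e=\{i\in C_v: e_i(v)=e\}.
\]
Then \(C_v=\bigsqcup_{e\ni v}P_e\), a union of \(r\) pieces, some possibly empty. For \(w\in W\), set \(D_w=\bigcup_{e\ni w}P_e\). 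This union is disjoint, since a point \(i\) meets at most one edge at \(w\) by injectivity of the right endpoints. The recombined family is \(\{D_w\}_{w\in W}\), with uniform weights. The frequency of \(i\) in it equals \(|S_i|/(\theta N)\leqslant\alpha/\theta\).

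Finally I compare the deficits. Using \(f(\varnothing)=0\) and \eqref{eq:additivity} repeatedly, a disjoint union of \(r\) pieces costs at most \(r-1\), so \(f(C_v)\leqslant\sum_{e\ni v}f(P_e)+(r-1)\). Likewise \(f(D_w)\geqslant\sum_{e\ni w}f(P_e)-(c-1)\). Summing over \(V\) and over \(W\), and using \(\theta N(c-1)=N(r-\theta)\), gives
\[
\sum_v f(C_v)\leqslant\sum_w f(D_w)+N(r-1)+N(r-\theta).
\]
Now write \(f(C_v)=M-d_v\) and \(f(D_w)=M-d'_w\), and divide by \(N\). This yields
\[
M-D\leqslant\theta(M-D')+2r-1-\theta,
\]
which is \eqref{eq:recombination}. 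The surplus version follows by applying the same argument to \(-f\).

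The step I expect to need the most care is the Hall step. The expansion hypothesis must be used exactly on subsets of \(S_i\), which is why the frequency bound \(\alpha\) is required. The matching must also be chosen edge-wise in the multigraph, so that the pieces at each left vertex partition \(C_v\) and the pieces at each right vertex are pairwise disjoint. Both disjointness properties are needed for the two additivity estimates above. The replication step that makes \(N\) admissible is routine.
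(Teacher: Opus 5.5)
Your proof is correct and follows the paper's argument essentially step for step. You replicate the multiset to an admissible \(N\). You then apply expansion plus Hall's theorem separately for each point, which splits each \(C_v\) into \(r\) edge pieces that are pairwise disjoint at each right vertex. Finally, you sum the two iterated-additivity estimates over \(V\) and over \(W\), using \(c\theta=r\).
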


\begin{proof}
Clear the denominators in \(\cC\).  Choose a sufficiently large admissible
\(N\) which is also a multiple of the resulting multiset cardinality, and
duplicate the multiset to have \(N\) members.  Label the left vertices by
these sets, say \((C_v)_{v\in V}\).

For \(i\in U\), the left vertices whose sets contain \(i\) form a set of
cardinality at most \(\alpha N\).  The expansion property and Hall's theorem
give a matching of these occurrences into distinct right neighbours.  Place
\(i\) in the edge piece used by this matching.  Doing this separately for
every \(i\) partitions each \(C_v\) into its incident edge pieces, while the
pieces arriving at a fixed right vertex are pairwise disjoint.  Their union
will be denoted by \(T_w\).  The point frequencies of the family
\((T_w)_{w\in W}\) are at most \(\alpha/\theta\).

Let \(d_v=M-f(C_v)\) and \(d'_w=M-f(T_w)\).  Iterated
\(1\)-additivity and biregularity give
\[
 \sum_{e\ni v}f(C_e)\geqslant f(C_v)-(r-1),
 \qquad
 \sum_{e\ni w}f(C_e)\leqslant f(T_w)+(c-1).
\]
Summing first over \(V\) and then over \(W\), and using
\(c\theta=r\), yields
\[
 N(M-D)-N(r-1)
 \leqslant \theta N(M-D')+N(r-\theta).
\]
This is \eqref{eq:recombination}.  Apply the same argument to \(-f\) for
surpluses.
\end{proof}

\section{Biregular expanders}

We use a standard random biregular construction in the form below.  The proof
is included because the numerical parameters matter in the final estimate.
For \(a>b>0\), set
\[
 h(a,b)=a\log a-b\log b-(a-b)\log(a-b),
\]
and, when \(\theta=r/c\), define
\begin{equation}\label{eq:Phi}
 \Phi_{r,\theta}(x)
 =h(1,x)+h(\theta,x)+h(rx/\theta,rx)-h(r,rx).
\end{equation}

\begin{lemma}\label{lem:pippenger}
Let \(0<\alpha<\theta<1\), \(r\geqslant3\), and
\(\theta=r/c\) for an integer \(c>r\).  Suppose that some
\(\delta\in(0,\alpha)\) satisfies
\begin{equation}\label{eq:pippenger-small}
        e^2\theta^{1-r}\delta^{r-2}<\frac1{20}
\end{equation}
and
\begin{equation}\label{eq:pippenger-mid}
        \Phi_{r,\theta}(x)<0\qquad(\delta\leqslant x\leqslant\alpha).
\end{equation}
Then \((\alpha,r,\theta)\)-expanders exist.
\end{lemma}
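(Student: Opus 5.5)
We use the configuration model and a first-moment bound. For admissible \(N\), take \(rN\) left half-edges (\(r\) per left vertex) and \(rN=c\theta N\) right half-edges (\(c\) per right vertex), and match them by a uniformly random perfect matching. The result is a biregular multigraph with the required degrees. It fails to be an \((\alpha,r,\theta)\)-expander exactly when some set \(S\subseteq V\) with \(|S|=s\leqslant\alpha N\) has all its neighbours inside a set \(T\subseteq W\) with \(|T|=s-1\). Enlarging \(T\) only weakens the event, so we may take \(|T|=s\), or \(|T|=\min(s,\theta N)\); since \(\alpha<\theta\), the case \(s\leqslant \theta N\) is the only one that arises.

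For fixed \(S\) and \(T\), the probability that all \(rs\) half-edges of \(S\) land among the \(cs\) half-edges of \(T\) is
\[
\binom{cs}{rs}\Big/\binom{rN}{rs}.
\]
By the union bound, the failure probability is at most
\[
\sum_{s=1}^{\alpha N}\binom{N}{s}\binom{\theta N}{s}\binom{cs}{rs}\binom{rN}{rs}^{-1}.
\]
We split the sum into the range \(s\leqslant\delta N\) and the range \(\delta N< s\leqslant\alpha N\), and aim to show that the total tends to \(0\). Then an expander exists for every large admissible \(N\).

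\emph{Middle range.} Put \(x=s/N\). Stirling's formula gives \(\log\binom{aN}{bN}=N h(a,b)+O(\log N)\). Since \(cs=rxN/\theta\), the exponent of the \(s\)-th term is
\[
N\bigl(h(1,x)+h(\theta,x)+h(rx/\theta,rx)-h(r,rx)\bigr)+O(\log N)=N\Phi_{r,\theta}(x)+O(\log N).
\]
The function \(\Phi_{r,\theta}\) is continuous on the compact interval \([\delta,\alpha]\) and negative there by \eqref{eq:pippenger-mid}, so it is at most some \(-\eta<0\). This range therefore contributes at most \(N\cdot e^{-\eta N+O(\log N)}\to0\). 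The error in Stirling is uniform here because all the binomial parameters are of order \(N\).

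\emph{Small range.} This is the main obstacle, since Stirling's approximation is not uniform when \(s\) is small. We use the elementary bounds \(\binom{n}{k}\leqslant(en/k)^k\), \(\binom{n}{k}\geqslant(n/k)^k\), and \(\binom{cs}{rs}\leqslant (ec/r)^{rs}=(e/\theta)^{rs}\). The \(s\)-th term is then at most
\[
\Bigl(\frac{eN}{s}\Bigr)^{s}\Bigl(\frac{e\theta N}{s}\Bigr)^{s}\Bigl(\frac{e}{\theta}\Bigr)^{rs}\Bigl(\frac{s}{N}\Bigr)^{rs}
=\Bigl(e^{2+r}\theta^{1-r}(s/N)^{r-2}\Bigr)^{s}.
\]
This carries a spurious factor \(e^{r}\) compared with \eqref{eq:pippenger-small}. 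To remove it, we bound \(\binom{cs}{rs}/\binom{rN}{rs}\) more sharply by the product
\[
\prod_{j<rs}\frac{cs-j}{rN-j}\leqslant\Bigl(\frac{cs}{rN}\Bigr)^{rs}=\Bigl(\frac{x}{\theta}\Bigr)^{rs},
\]
valid since \(cs\leqslant rN\). The \(s\)-th term then becomes
\[
\Bigl(e^{2}\theta^{1-r}x^{r-2}\Bigr)^{s}\leqslant\bigl(e^{2}\theta^{1-r}\delta^{r-2}\bigr)^{s}\Bigl(\frac{x}{\delta}\Bigr)^{(r-2)s}.
\]
For \(s\leqslant \sqrt N\), the base is \(O(N^{-(r-2)/2})\), and \(r\geqslant3\) makes the sum over this range \(o(1)\). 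For \(\sqrt N< s\leqslant\delta N\), the base is at most \(e^2\theta^{1-r}\delta^{r-2}<1/20\) by \eqref{eq:pippenger-small}. This range thus contributes at most \(\sum_{s>\sqrt N}20^{-s}\to0\).

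Adding the two ranges, the failure probability is \(o(1)\), so expanders exist for every sufficiently large admissible \(N\).
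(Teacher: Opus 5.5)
Your proof is correct and follows the paper's argument essentially step for step: the configuration model, the union bound over pairs \((S,T)\) with \(|S|=|T|=s\), the entropy estimate \((rN+1)e^{N\Phi_{r,\theta}(x)}\) on \([\delta,\alpha]\), and the product bound \(\binom{cs}{rs}/\binom{rN}{rs}\leqslant(x/\theta)^{rs}\) for small sets. The only difference is that you split the small range at \(\sqrt N\) to make the failure probability \(o(1)\), whereas the paper bounds the whole small-range sum by \(1/19\) and concludes that the expected number of bad sets is below one; your split incidentally shows that \(e^2\theta^{1-r}\delta^{r-2}<1\) would already suffice.
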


\begin{proof}
Let \(N\) be sufficiently large and admissible.  Take \(r\) copies of each
of \(N\) left vertices and \(c\) copies of each of \(\theta N\) right
vertices, and choose a uniformly random perfect matching between the two sets
of copies.  The resulting multigraph is \((r,c)\)-biregular.

Fix \(m=xN\) left vertices.  If their neighbourhood has fewer than \(m\)
vertices, it is contained in some set of \(m\) right vertices.  For a fixed
choice of the left and right sets, the probability of this containment is
\[
        \frac{\binom{cm}{rm}}{\binom{rN}{rm}}.
\]
Consequently, the expected number of bad left sets of size \(m\) is at most
\begin{equation}\label{eq:bad-expectation}
 \binom Nm\binom{\theta N}m
 \frac{\binom{cm}{rm}}{\binom{rN}{rm}}.
\end{equation}

For \(\delta N\leqslant m\leqslant\alpha N\), the usual entropy estimates
for binomial coefficients \cite[Appendix~A]{AlonSpencer} turn
\eqref{eq:bad-expectation} into
\[
        (rN+1)\exp\bigl(N\Phi_{r,\theta}(x)\bigr).
\]
By \eqref{eq:pippenger-mid} and compactness, the sum of these quantities tends
to zero.

For \(m=xN\leqslant\delta N\),
\[
 \frac{\binom{cm}{rm}}{\binom{rN}{rm}}
 \leqslant\left(\frac{m}{\theta N}\right)^{rm}.
\]
Together with \(\binom nj\leqslant(en/j)^j\), this bounds
\eqref{eq:bad-expectation} by
\[
        \left(e^2\theta^{1-r}x^{r-2}\right)^m.
\]
The sum over all \(m\geqslant1\) is at most \(1/19\) by
\eqref{eq:pippenger-small}.  Thus the expected number of bad sets is less
than one for large admissible \(N\), proving the assertion.  This is the
random biregular argument used in Pippenger's work on superconcentrators
\cite{Pippenger}.
\end{proof}

Put
\begin{equation}\label{eq:caps}
        \beta=\frac{211}{2100},
        \qquad
        \gamma=\frac{49}{400}.
\end{equation}
We shall use the following three rows:
\begin{center}
\begin{tabular}{c c c c c}
\toprule
 & \(\alpha\) & \(r\) & \(c\) & \(\theta\)\\
\midrule
\(E_1\) & \(211/2100\) & \(4\) & \(12\) & \(1/3\)\\
\(E_2\) & \(211/700\)  & \(4\) & \(7\)  & \(4/7\)\\
\(E_3\) & \(49/400\)   & \(4\) & \(11\) & \(4/11\)\\
\bottomrule
\end{tabular}
\end{center}

Here are exact checks of the hypotheses of Lemma~\ref{lem:pippenger}, with
\(\delta=1/100\).  We use \(e^2<739/100\).
\begin{center}
\small
\begin{tabular}{c c c c c}
\toprule
 & \((739/100)\theta^{-3}\delta^2\) &
 \(\Phi(\delta)\) & \(\Phi(\alpha)\) & lower bound for \(\Phi''\)\\
\midrule
\(E_1\) & \(19953/10^6\) & \(<-1/1000\) & \(<-1/1000\) &
\(640079/34393\)\\
\(E_2\) & \(253477/64000000\) & \(<-1/1000\) & \(<-1/20000\) &
\(33791/5697\)\\
\(E_3\) & \(983609/64000000\) & \(<-1/1000\) & \(<-1/100000\) &
\(789167/51989\)\\
\bottomrule
\end{tabular}
\end{center}
The entries in the second column are smaller than \(1/20\), and those in the
last column are greater than \(3\).  To explain the remaining arithmetic,
differentiation gives
\begin{equation}\label{eq:Phi-second}
 \Phi''_{r,\theta}(x)
 =\frac{r-2}{x}+\frac{r-1}{1-x}-\frac1{\theta-x}.
\end{equation}
On \([\delta,\alpha]\), the right-hand side is at least
\[
 \frac{r-2}{\alpha}+(r-1)-\frac1{\theta-\alpha},
\]
which gives the last column.  Hence \(\Phi\) is convex, so its maximum on
the interval is attained at an endpoint.

The endpoint logarithms may be bounded without decimal arithmetic.  Given a
positive rational \(z\), choose an integer \(k\) for which
\(y=2^kz\in[1,2]\).  Then \(\log z=\log y-k\log2\), and, with
\(t=(y-1)/(y+1)\),
\[
 \log y=2\sum_{n=0}^{L-1}\frac{t^{2n+1}}{2n+1}+R_L,
 \qquad
 0\leqslant R_L\leqslant
 \frac{2t^{2L+1}}{(2L+1)(1-t^2)}.
\]
Here \(0\leqslant t\leqslant1/3\).  Substitution with \(L=30\) gives the
rational upper bounds displayed above.  Thus all three expander
families exist.

\section{Proof of Theorem A}

We retain the notation of Section~2.  In particular, \(q\leqslant p=1-q\),
\(-1\leqslant u\leqslant1\), and the families \(\cA,\cB\) are supplied by
Lemma~\ref{lem:extremal-families}.

We first record two consequences of the preceding section.  Suppose that a
weighted family has point frequencies at most \(\beta\) and average deficit
\(D_0\).  Apply \(E_1\), and call the new average deficit \(D_1\).  The new
frequencies are at most \(3\beta=211/700\), so \(E_2\) may be applied; write
\(D_2\) for the final average deficit.  Lemma~\ref{lem:recombination} gives
\[
 \frac23M\leqslant D_0-\frac13D_1+\frac{20}{3},
 \qquad
 \frac37M\leqslant D_1-\frac47D_2+\frac{45}{7}.
\]
Adding one third of the second inequality to the first, and discarding the
non-positive term containing \(D_2\) (recall that \(D_2\geqslant0\)), yields
\begin{equation}\label{eq:direct-estimate}
        \frac{17}{21}M\leqslant D_0+\frac{185}{21}.
\end{equation}
The same estimate holds with deficit replaced by surplus.

There is another useful way to combine expanders.  Set
\[
        \theta=\frac4{11},\qquad
        K=2\cdot4-1-\theta=\frac{73}{11}.
\]
Starting with point frequencies at most \(\gamma\), one application of
\(E_3\) produces frequencies at most
\[
        t=\frac{\gamma}{\theta}=\frac{539}{1600}.
\]
Take the convex combination of the resulting family and its
pairwise-intersection family, assigning the latter weight
\[
        \sigma=\frac{t-\gamma}{t-t^2}=\frac{11200}{11671},
 \qquad
        m=1+\sigma=\frac{22871}{11671}.
\]
The function \(s\mapsto(1-\sigma)s+\sigma s^2\) is increasing for
\(s\geqslant0\), and its value at \(t\) is \(\gamma\).  The resulting
family therefore has point frequencies at most \(\gamma\), so \(E_3\) can be
applied again.

If \(D_1\) is the average deficit after the first application, then the
average deficit of the resulting family is at most
\(mD_1+2(m-1)\).  Apply \eqref{eq:recombination} a second time, multiply the
result by \(\theta/m\), and add it to the inequality from the first
application.  The terms containing \(D_1\) cancel.  If \(D_2\) is the final
average deficit, we obtain
\[
 (1-\theta)\left(1+\frac{\theta}{m}\right)M
 \leqslant
 D_0+K+\frac{\theta}{m}\{2(m-1)+K\}
 -\frac{\theta^2}{m}D_2.
\]
Since \(D_2\geqslant0\), it follows that
\begin{equation}\label{eq:repeated-estimate}
        \frac{17255}{22871}M
        \leqslant D_0+\frac{2068995}{251581}.
\end{equation}
Again, the same estimate holds for surpluses.

Put
\[
        C_0=\frac{374167}{20000}.
\]
Consequently, the two-expander estimate gives \(M\leqslant C_0\) whenever
\(D_0\leqslant d_{\mathrm D}\), and the repeated-\(E_3\) estimate does so
whenever \(D_0\leqslant d_{\mathrm I}\), where
\begin{align}
 d_{\mathrm D}
 &=
 C_0\frac{17}{21}-\frac{185}{21}
 =\frac{2660839}{420000},\label{eq:dD}\\
 d_{\mathrm I}
 &=
 C_0\frac{17255}{22871}
 -\frac{2068995}{251581}
 =\frac{5927773487}{1006324000}.\label{eq:dI}
\end{align}

We shall also use the cancellation mentioned in the introduction.  Suppose
that intersections of mean levels \(a\) and \(b\) have been taken in
\(\cA\) and \(\cB\), respectively, and put
\[
        x=aq,\qquad y=bp.
\]
Their average deficit and surplus are at most
\[
 x(1-u)+2(a-1),\qquad y(1+u)+2(b-1),
\]
respectively.  Apply the same expander estimate to the two families, multiply
the deficit inequality by \(y\) and the surplus inequality by \(x\), and add.
After division by \(x+y\), the terms containing \(u\) disappear.  The
effective source cost is
\begin{equation}\label{eq:W}
 W(a,b;q)=
 \frac{y\{x+2(a-1)\}+x\{y+2(b-1)\}}{x+y}.
\end{equation}
In particular, either estimate proves \(M\leqslant C_0\) as soon as \(W\) is
at most the corresponding number in \eqref{eq:dD} or \eqref{eq:dI}.

For \(0<t<1\) and \(t^4\leqslant B\), write
\begin{equation}\label{eq:mB}
 m_B(t)=
 \begin{cases}
 3,&t^3\leqslant B,\\[2mm]
 \displaystyle 3+\frac{t^3-B}{t^3-t^4},&t^4\leqslant B<t^3.
 \end{cases}
\end{equation}
This is the mean level of a triple intersection, or of a convex combination
of triple and fourfold intersections, whose point frequency is at most \(B\).

We need one elementary monotonicity fact.  Take \(a=3\),
\(b=m_B(p)\), \(p=1-q\), and \(1/2\leqslant p\leqslant11/20\).  Direct
differentiation gives
\begin{equation}\label{eq:pure-derivative}
 \frac{d}{dp}W(3,m_B(p);1-p)
 =
 -\frac{6R_B(p)}
 {p^2(B+2p^3-3p^2)^2},
\end{equation}
where
\[
\begin{split}
 R_B(p)={}&B^2(p^2+1)
 +Bp^2(15p^4-32p^3+12p^2+10p-9)\\
 &+p^6(12p^3-41p^2+42p-11).
\end{split}
\]
The two polynomials in parentheses are increasing on
\([1/2,11/20]\), and on that interval they are bounded above by
\(-19/5\) and \(17/10\), respectively.  It follows that
\[
 R_B(p)\leqslant
 \frac{521}{400}B^2-\frac{19}{20}B
 +\frac{17}{10}\left(\frac{11}{20}\right)^6.
\]
For \(B=\beta\) and \(B=\gamma\), the right-hand side is, respectively,
\[
 -\frac{9947816623}{282240000000}
 \quad\hbox{and}\quad
 -\frac{31854253}{640000000}.
\]
Thus the derivative in \eqref{eq:pure-derivative} is positive: in these two
instances \(W(3,m_B(1-q);q)\) decreases as \(q\) increases.

Set
\[
        q_-=\frac{2251}{5000},
        \qquad
        q_+=\frac{47193}{100000}.
\]

\medskip
\noindent\emph{First range: \(0<q\leqslant q_-\).}
Take the convex combination of double and triple intersections of \(\cA\)
for which
\[
 \tau_-=\frac{q_-^2-\gamma}{q_-^2-q_-^3}
       =\frac{10022505000}{13929185749},
 \qquad
 m_-=2+\tau_-=\frac{37880876498}{13929185749}.
\]
The function \(t\mapsto(1-\tau_-)t^2+\tau_-t^3\) is increasing on
\([0,1]\), and its value at \(q_-\) is \(\gamma\).  Hence the resulting
family has point frequencies at most \(\gamma\).  Its average deficit satisfies
\[
\begin{split}
 D_0
 &\leqslant 2m_-q+2(m_--1)\\
 &\leqslant 2m_-q_-+2(m_--1)
 =\frac{102514153370999}{17411482186250}
 <\frac{736}{125}<d_{\mathrm I}.
\end{split}
\]
The repeated-\(E_3\) estimate proves \(M\leqslant C_0\) in this range.

\medskip
\noindent\emph{Second range: \(q_-\leqslant q\leqslant q_+\).}
Use the two-expander estimate and put
\[
        a=m_\beta(q),\qquad b=m_\beta(p).
\]
These choices are legitimate: \(p^3\geqslant1/8>\beta\),
\(p^4\leqslant(11/20)^4<\beta\), and, when \(q^3>\beta\),
\(q^4\leqslant q_+^4<\beta\).

If \(q^3\leqslant\beta\), then \(a=3\).
Equation~\eqref{eq:pure-derivative} shows that \(W\) decreases with \(q\).
Its maximum is therefore attained at \(q_-\), and direct substitution in
\eqref{eq:W} gives
\begin{equation}\label{eq:middle-pure}
 W(a,b;q)
 \leqslant W\bigl(3,m_\beta(1-q_-);q_-\bigr)
 <\frac{1267}{200}<d_{\mathrm D}.
\end{equation}

Suppose that \(q^3>\beta\), so both \(a\) and \(b\) come from the second
line of \eqref{eq:mB}.  Put \(s=q(1-q)\).  On simplifying
\eqref{eq:W}, we find
\begin{equation}\label{eq:W-beta}
 W_\beta(s)=
 \frac{2\{\beta^2s+\beta^2+6\beta s^3+7\beta s^2-\beta s-\beta
       +9s^5+12s^4+3s^3\}}
 {s^2(s^2+s-\beta)}.
\end{equation}
Its derivative is
\begin{equation}\label{eq:W-beta-derivative}
 W_\beta'(s)=
 \frac{2T_\beta(s)}{s^3(s^2+s-\beta)^2},
\end{equation}
where
\[
\begin{split}
 T_\beta(s)={}&\beta^3(s+2)
 -\beta^2(9s^3+6s^2+4s+2)\\
 &+\beta(-33s^5-38s^4-7s^3+6s^2+3s)
 +9s^5(s+1)^2.
\end{split}
\]
Here \(9/40\leqslant s\leqslant1/4\), and
\[
\begin{split}
T_\beta(s)\geqslant{}&
9\left(\frac9{40}\right)^5\left(\frac{49}{40}\right)^2\\
&+\beta\left\{
6\left(\frac9{40}\right)^2+3\left(\frac9{40}\right)
-33\left(\frac14\right)^5-38\left(\frac14\right)^4
-7\left(\frac14\right)^3\right\}\\
&-\beta^2\left\{
9\left(\frac14\right)^3+6\left(\frac14\right)^2
+4\left(\frac14\right)+2\right\}\\
={}&\frac{333130158209}{8028160000000}>0.
\end{split}
\]
The denominator in \eqref{eq:W-beta-derivative} is positive.  Thus
\(W_\beta\) increases with \(s\), while \(s=q(1-q)\) increases with \(q\)
on the present interval.  At the right endpoint, rational
cross-multiplication gives
\begin{equation}\label{eq:middle-fourfold}
        W_\beta\bigl(q_+(1-q_+)\bigr)
        <\frac{63353}{10000}<d_{\mathrm D}.
\end{equation}
Together, \eqref{eq:middle-pure} and \eqref{eq:middle-fourfold} settle the
second range.

\medskip
\noindent\emph{Third range: \(q_+\leqslant q\leqslant1/2\).}
Use the repeated-\(E_3\) estimate and put
\[
        a=m_\gamma(q),\qquad b=m_\gamma(p).
\]
The required convex combinations are available:
\(p^3\geqslant1/8>\gamma\),
\(p^4\leqslant(53/100)^4<\gamma\), and, when needed,
\(q^4\leqslant1/16<\gamma\).

If \(q^3\leqslant\gamma\), then \(a=3\), and
\eqref{eq:pure-derivative} places the maximum at \(q_+\).  Substitution in
\eqref{eq:W} gives
\begin{equation}\label{eq:high-pure}
 W(a,b;q)
 \leqslant W\bigl(3,m_\gamma(1-q_+);q_+\bigr)
 <\frac{147263}{25000}<d_{\mathrm I}.
\end{equation}

It remains to treat \(q^3>\gamma\).  Both \(a\) and \(b\) now come from the
second line of \eqref{eq:mB}.  With \(s=qp\), substitution in
\eqref{eq:W} gives
\[
 W(a,b;q)=
 \frac{2\{\gamma^2s+\gamma^2+6\gamma s^3+7\gamma s^2-\gamma s-\gamma
       +9s^5+12s^4+3s^3\}}
 {s^2(s^2+s-\gamma)}.
\]
A direct factorisation yields
\begin{equation}\label{eq:gamma-factorisation}
 \frac{21-80\gamma}{2}-W(a,b;q)
 =
 \frac{(1-4s)R_\gamma(s)}
 {2s^2(s^2+s-\gamma)},
\end{equation}
where
\[
 R_\gamma(s)=9s^4+(9+20\gamma)s^3+31\gamma s^2
 +20\gamma(1-\gamma)s+4\gamma(1-\gamma).
\]
All coefficients of \(R_\gamma\) are positive.  Moreover,
\(9/40\leqslant s\leqslant1/4\), so \(s^2+s-\gamma>0\) and
\(1-4s\geqslant0\).  It follows that
\[
        W(a,b;q)\leqslant\frac{21-80\gamma}{2}
        =\frac{28}{5}<d_{\mathrm I}.
\]

The three ranges cover \(0<q\leqslant1/2\).  We have proved
\(M\leqslant C_0\) on every finite power-set algebra in the normalisation
\(\Delta=1\).  Lemma~\ref{lem:finite-reduction} and scaling complete the
proof.

\section*{Acknowledgements}

The second-named author gratefully acknowledges support from NCN Sonata-Bis~13
(2023/50/E/ST1/00067).

ChatGPT was used at an exploratory stage to generate ideas that led to the
preliminary bound \(20.5\); the final bound was obtained by the authors.  We
thank Marcin Guzik for helpful discussions and Tomasz Kochanek for bringing the
problem to our attention.


\begin{thebibliography}{99}

\bibitem{AlonSpencer}
N.~Alon and J.~H. Spencer,
\emph{The Probabilistic Method}, 4th ed.,
Wiley Series in Discrete Mathematics and Optimization, Wiley, Hoboken, 2016.
\url{https://doi.org/10.1002/9781119061953}

\bibitem{BondarenkoPrymakRadchenko}
A.~V. Bondarenko, A.~Prymak and D.~Radchenko,
\emph{On concentrators and related approximation constants},
J. Math. Anal. Appl. \textbf{402} (2013), no.~1, 234--241.
\url{https://doi.org/10.1016/j.jmaa.2013.01.019}

\bibitem{FeigeFeldmanTalgamCohen}
U.~Feige, M.~Feldman and I.~Talgam-Cohen,
\emph{Approximate modularity revisited},
SIAM J. Comput. \textbf{49} (2020), no.~1, 67--97.
\url{https://doi.org/10.1137/18M1173873}

\bibitem{GnacikGuzikKania}
M.~Gnacik, M.~Guzik and T.~Kania,
\emph{Approximate modularity: Kalton's constant is not smaller than 3},
Proc. Amer. Math. Soc. \textbf{149} (2021), no.~2, 661--669.
\url{https://doi.org/10.1090/proc/15195}

\bibitem{KaltonRoberts}
N.~J. Kalton and J.~W. Roberts,
\emph{Uniformly exhaustive submeasures and nearly additive set functions},
Trans. Amer. Math. Soc. \textbf{278} (1983), no.~2, 803--816.
\url{https://doi.org/10.1090/S0002-9947-1983-0701524-4}

\bibitem{Pawlik}
B.~Pawlik,
\emph{Approximately additive set functions},
Colloq. Math. \textbf{54} (1987), no.~1, 163--164.
\url{https://doi.org/10.4064/cm-54-1-163-164}

\bibitem{Pippenger}
N.~Pippenger,
\emph{Superconcentrators},
SIAM J. Comput. \textbf{6} (1977), no.~2, 298--304.
\url{https://doi.org/10.1137/0206022}

\end{thebibliography}
\end{document}